\theoremstyle{plain}
\theoremstyle{plain}
\theoremstyle{plain}
\newcommand{\lra}{\longrightarrow}
\newcommand{\ur}{\uparrow}
\newcommand{\dr}{\downarrow}
\theoremstyle{definition}
\newtheorem{dfn}[thm]{Definition}
\newtheorem{rmk}[thm]{Remark}
\newtheorem{exam}[thm]{Example}
\begin{document}

\title[Uniqueness of directed
complete posets]{Uniqueness of directed
complete posets based on Scott closed
set lattices}
\author[D. Zhao]{Zhao Dongsheng}	
\address{National Institute of Education, Nanyang Technological University, Singapore 637616}	
\email{dongsheng.zhao@nie.edu.sg}  
\author[L. Xu]{Xu Luoshan}	
\address{Department of Mathematics, Yangzhou University, Yangzhou 225002, China}	
\email{luoshanxu@hotmail.com}  



\keywords{dcpo; sober space; Scott topology; $C_{\sigma}$-unique dcpo}
\subjclass[2010]{06B30; 06B35; 54C35}


\begin{abstract}
 In analogy to a result due to Drake and Thron about topological spaces, this paper studies
the dcpos (directed complete posets) which  are fully determined, among all dcpos, by their lattices of all Scott-closed
subsets (such  dcpos  will be called  $C_{\sigma}$-unique).
  We   introduce the notions  of down-linear element and  quasicontinuous element in dcpos, and use them to prove that dcpos of certain classes, including all quasicontinuous dcpos as well as Johnstone's and Kou's examples, are $C_{\sigma}$-unique. As a consequence, $C_{\sigma}$-unique dcpos with their Scott topologies need  not be bounded sober.
\end{abstract}

\maketitle

\section{Introduction}

 From a result by Drake and Thron in \cite{drake-thron-1965}, one deduces the following  result (see Fact 3 in the Appendix): a topological space $X$ has the property that $C(X)$ isomorphic to $C(Y)$ implies $X$ is homeomorphic to $Y$ iff $X$ is sober and $T_D$ (every derived set $d(\{x\})=cl(\{x\})-\{x\}$ of point $x\in X$ is closed), where $C(X)$ and $C(Y)$ denote the lattices of closed sets of  $X$ and $T_0$ space $Y$, respectively (see also \cite{thornton-1973}, line 11-13, page 504).

For any dcpo $P$, let $C_{\sigma}(P)$ denote the lattice of all Scott closed subsets of $P$ (with the inclusion order).
A directed complete poset (or dcpo, for short)  $P$ will be called a $C_{\sigma}$-unique dcpo (or $C_{\sigma}$-unique, for short) if for any dcpo $Q$, $P$ is isomorphic to $Q$ whenever the lattices  $C_{\sigma}(P)$ and $C_{\sigma}(Q)$ are isomorphic. From a counterexample constructed in \cite{HJX-16} recently, we know that not every dcpo is $C_{\sigma}$-unique. It is therefore natural to ask which dcpos are $C_{\sigma}$-unique. One of the classic results in domain theory is that a dcpo $P$ is continuous iff the lattice $C_{\sigma}(P)$  is a completely distributive lattice (Theorem II-1.14 of \cite{comp}). From this it follows  that every continuous dcpo is sober and  $C_{\sigma}$-unique. In a similar way, one can deduce that every quasicontinuous dcpo is sober and $C_{\sigma}$-unique.
Compared  with  Drake's and Thron's result, one naturally asks whether every $C_{\sigma}$-unique dcpo is sober in their Scott topology.

In \cite{johnstone-81}, Johnstone constructed the first dcpo whose Scott topology is not sober. Later Isbell \cite{isbell-1982a} constructed a complete lattice whose Scott topology is  not sober. Kou \cite{kouhui} further gave a dcpo whose Scott topology is well-filtered but not sober. In this paper, we will introduce the concepts of quasicontinuous element and down-linear element in dcpos. With these concepts we  identify some classes of $C_{\sigma}$-unique dcpos, that  include all quasicontinuous dcpos as well as Johnstone's and Kou's examples. The full characterization of  $C_{\sigma}$-unique dcpos is still open.
\section{Preliminaries}
For any subset $A$ of a poset $P$, let $\ur\!\!A=\{x\in\!P: y\le\!x \mbox{ for some }y\in\!\!A\}$ and $\downarrow\!\!A =\{x\in\!P: x\le\!y \mbox{ for some } y\in\!A\}$. A subset $A$ is called an upper set if $A=\ur\!\!A$, and a lower set if $A=\downarrow\!\!A$.
A subset $U$ of a poset $P$ is Scott open  if (i) $U=\ur\!U$  and (ii) for
any directed subset $D$, $\bigvee D\in U$ implies $D\cap U\not=\emptyset$, whenever $\bigvee D$ exists.
All Scott open sets of a poset $P$ form a topology on $P$, denoted by $\sigma(P)$ and called the Scott topology on $P$. The complements of Scott open sets are called Scott closed sets. Clearly, a subset $A$ is Scott closed iff (i) $A=\dr\!\! A$ and (ii) for any directed subset $D\subseteq A$, $\bigvee D\in A$ whenever $\bigvee\!D$ exists. The set of all Scott closed sets of $P$ will be denoted by $C_{\sigma}(P)$. The space $(P, \sigma(P))$ is denoted by $\Sigma\!P$.

A poset $P$ is directed complete if its every directed subset has a supremum. A directed complete poset is briefly called a dcpo.

A subset $A$ of a topological space is irreducible if whenever $A\subseteq F_1\cup F_2$ with $F_1$ and $F_2$ closed, then  $A\subseteq F_1$ or $A\subseteq F_2$ holds. The set of all nonempty irreducible closed subsets of space $X$ will be denoted by $Irr(X)$.

For any $T_0$ topological space $(X, \tau)$, the specialization order $\le_{\tau}$ on $X$ is defined by $x\le_{\tau} y$ iff $x\in cl(\{y\})$ where $``cl(\cdot)"$ means taking closure.
\begin{rmk}

\label{irreducible sets}\leavevmode
\begin{enumerate}[label=(\arabic*)]
\item For any topological space $X$, $(Irr(X), \subseteq )$ is a dcpo. If  $\mathcal{D}$ is a directed subset  of $Irr(X)$, the supremum of $\mathcal{D}$ in $(Irr(X), \subseteq )$ equals $cl(\bigcup\mathcal{D})$ (the closure of $\bigcup\mathcal{D}$), which is the same as the supremum of $\mathcal{D}$ in the complete lattice  of all closed sets of $X$.

\item For any $x\in X$, $cl(\{x\})\in Irr(X)$. A $T_0$ space $X$ is called sober if $Irr(X)=\{cl(\{x\}): x\in X\}$, that is, every nonempty irreducible closed set is the closure of a point.

\item Assume that $(X,\tau)$ and $(Y,\eta)$ are topological spaces such that  the open set lattices ${(\tau,\subseteq)}$ and $(\eta,\subseteq)$ of $X$ and $Y$ are isomorphic, then the closed set lattice $(C(X), \subseteq )$ of $X$ and the closed set lattice $(C(Y), \subseteq)$ of $Y$ are also isomorphic (they are dual to the corresponding open set lattices). Since  irreducibility is a lattice-intrinsic property of elements, it follows that the posets $(Irr(X), \subseteq)$ and $(Irr(Y), \subseteq)$ are isomorphic.
\end{enumerate}
\end{rmk}

For a $T_0$ space $X$, a sobrification of $X$ is a sober space $Y$ together with a continuous mapping $\eta_X: X\lra Y$, such that for any continuous mapping $f: X\lra Z$ with $Z$ sober, there is a unique continuous mapping $\hat{f}: Y\lra Z$ satisfying $f=\hat{f}\circ \eta_X$. The sobrification of a $T_0$ space is unique up to homeomorphism. Clearly if a space $X$ is sober, then $X$  is homeomorphic to any sobrification of $X$.

\begin{rmk}
\label{soberfication}
The following facts about sober spaces and sobrifications are well-known.
\begin{enumerate}[label=(\arabic*)]
\item The set $Irr(X)$ of all nonempty closed irreducible sets of a $T_0$ space $X$ equipped with the hull-kernel topology is a sobrification  of $X$, where the mapping $\eta_X: X\lra Irr(X)$ is defined by $\eta_X(x)=cl(\{x\})$ for all $x\in X$.
 The closed sets of the hull-kernel topology consists of all sets of the form $h(A)=\{F\in Irr(X): F\subseteq A\}$ ($A$ is a closed set of $X$).
So the sobrification of a space $X$ is totally determined by the lattice $C(X)$.
(See Exercise V-4.9 of \cite{comp} for details, where the topology was given by means of open sets).

\item If $X$ and $Y$ are both sober spaces and the closed set lattices $C(X)$ and  $C(Y)$ are homomorphic,
then the sobrification  of $X$ and that of $Y$ are homeomorphic. Hence $X$ and $Y$ are homeomorphic.

\item From (1) and (2), we easily deduce that if $Y$ is a sober space, then $Y$ is a sobrification  of a $T_0$ space $X$  iff  the  closed  set lattices  $C(X)$ and $C(Y)$ are isomorphic (equivalently, the open set lattice of $Y$ is isomorphic to that of $X$).
\end{enumerate}
\end{rmk}

\smallskip

A  $T_0$ space $X$ will be called  Scott sobrifiable if there is a dcpo $P$ such that the Scott space $\Sigma \!P$ is a sobrification  of $X$.

For any $T_0$ space $(X, \tau)$, let $\le_{\tau}$ be the specialization order on $X$ ($x\le_{\tau} y$ iff $x\in cl(\{y\})$).
It is well-known that the specialization order on the Scott space $\Sigma\!P$ of a poset $P$ coincides with the original order on $P$. Thus a $T_0$ space $(X, \tau)$  is homeomorphic to $\Sigma\,P$ for some poset $P$ iff $(X, \tau)$ is homeomorphic to the Scott space $\Sigma(X, \le_{\tau})$ of the poset $(X, \le_{\tau})$.
The specialization order on the space $Irr(X)$ (with the hull-kernel topology) equals the inclusion order of sets.
From the above, we can easily deduce the following fact.

\begin{rmk}\label{scott soberfiable}

A $T_0$ space $(X, \tau)$ is Scott sobrifiable iff
for any Scott closed set  $\mathcal{F}$ of the dcpo $Irr(X)$,
there is a closed set $A$ of $X$ such that $\mathcal{F}=h(A)$, where $h(A)=\{F\in Irr(X): F\subseteq A\}$.
\end{rmk}

A topological space $(X, \tau)$ is called a d-space (or monotone convergence space) if (i) $X$ is $T_0$, (ii) the poset $(X, \le_{\tau})$ is a dcpo, and (iii) for any directed subset $D\subseteq (X, \le_{\tau})$, $D$ converges (as a net) to $\bigvee\!D$.

\begin{rmk}\label{d-spaces}\leavevmode
\begin{enumerate}[label=(\arabic*)]
\item Every sober space is a d-space.

\item Every Scott space $\Sigma P$ of a dcpo $P$ is a d-space.

\item If $(X, \tau)$ is a d-space, then every closed set $F$ of $X$ is a Scott closed set of the dcpo $(X, \leq_{\tau})$.
\end{enumerate}
\end{rmk}

\begin{lem}\label{sup of directed set of points}
Let $(X, \tau)$ be a d-space.
If $\{x_i: i\in I\}$ is a directed subset of $(X, \le_{\tau})$, then
the supremum $\sup\{cl(\{x_i\}): i\in I\}$ of $\{cl(\{x_i\}): i\in I\}$ in $Irr(X)$ equals
$cl(\{x\})$, where $x=\bigvee\{x_i: i\in I\}$.
\end{lem}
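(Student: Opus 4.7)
My plan is to combine the description of directed suprema in $Irr(X)$ from Remark~\ref{irreducible sets}(1) with the convergence property built into the definition of a d-space.

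First I would observe that the family $\{cl(\{x_i\}):i\in I\}$ is directed in $Irr(X)$: since $\{x_i:i\in I\}$ is directed in $(X,\le_\tau)$, and $y\le_\tau z$ means exactly $y\in cl(\{z\})$, we have $cl(\{x_i\})\subseteq cl(\{x_j\})$ whenever $x_i\le_\tau x_j$, so directedness is inherited. By Remark~\ref{irreducible sets}(1), its supremum in $Irr(X)$ exists and is given by
\[
\sup\{cl(\{x_i\}):i\in I\}=cl\bigl(\textstyle\bigcup_{i\in I}cl(\{x_i\})\bigr)=cl\bigl(\textstyle\bigcup_{i\in I}\{x_i\}\bigr).
\]
So the statement reduces to the set equality $cl(\bigcup_{i\in I}\{x_i\})=cl(\{x\})$ with $x=\bigvee_{i\in I}x_i$ (the supremum existing because $(X,\le_\tau)$ is a dcpo).

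For the inclusion $\subseteq$, I would use only the order: $x_i\le_\tau x$ for every $i$ means $x_i\in cl(\{x\})$, so $\bigcup_{i\in I}\{x_i\}\subseteq cl(\{x\})$, whence $cl(\bigcup_{i\in I}\{x_i\})\subseteq cl(\{x\})$. For the reverse inclusion $\supseteq$, this is where the d-space hypothesis enters: by condition (iii) of a d-space, the directed net $\{x_i\}_{i\in I}$ converges to $\bigvee_{i\in I}x_i=x$. Since the set $F:=cl(\bigcup_{i\in I}\{x_i\})$ is closed and contains each $x_i$, and closed sets are stable under net-convergence of their elements, the limit $x$ lies in $F$. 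Therefore $cl(\{x\})\subseteq F$, completing the equality.

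I do not anticipate a real obstacle here; the only subtle point is to invoke the d-space axiom correctly rather than treating the passage from $\{x_i\}$ to $x$ as a purely order-theoretic fact. If one only had a dcpo structure on $(X,\le_\tau)$ without topological convergence of directed sets to their suprema, then $x$ need not belong to $cl(\bigcup\{x_i\})$, which is precisely why the statement is phrased for d-spaces.
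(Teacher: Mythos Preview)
Your argument is correct. Note that the paper states this lemma without proof, so there is no ``paper's own proof'' to compare against; what you have written is exactly the natural justification, and it uses the paper's Remark~\ref{irreducible sets}(1) in the intended way.

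One small remark on presentation: for the reverse inclusion you invoke net-convergence into closed sets directly from axiom~(iii) of a d-space. This is fine, but the paper has already packaged this consequence as Remark~\ref{d-spaces}(3): in a d-space every closed set is Scott closed in $(X,\le_\tau)$. So once $F=cl(\bigcup_i\{x_i\})$ is closed and contains the directed set $\{x_i\}$, Scott closedness immediately gives $x=\bigvee_i x_i\in F$. Citing Remark~\ref{d-spaces}(3) here would make the argument mesh more cleanly with how the paper later uses the same fact (e.g.\ in the proof of Lemma~\ref{irr set with liear down set}(1)), but mathematically it is the same step.
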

\noindent For more about dcpos, Scott topology and related topics we refer the reader to \cite{comp} and \cite{Jean-2013}.

\section{Main results}
In this section, we identify some classes of $C_{\sigma}$-unique dcpos, using irreducible sets, down-linear elements,
quasicontinuous elements and the M property, respectively.

A $T_0$ space is called  bounded-sober  if every nonempty upper bounded (with respect to the specialization order on $X$) closed irreducible subset of the space is the closure of a point \cite{zandf-2010}. Every sober space is bounded-sober, the converse implication is not true.
%
If $X$ is a $T_0$ space such that every  irreducible closed {\sl proper } subset is the closure of an element, then  $X$ is bounded-sober.
In the following, a dcpo whose Scott topology is sober (bounded-sober) will be simply called a sober (bounded-sober) dcpo.

\begin{lem}\label{non-sober bounded sober}
For a bounded-sober dcpo $P$, $\Sigma\!P$ is Scott sobrifiable if and only if $P$ is sober.
\end{lem}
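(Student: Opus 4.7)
The plan is to handle the two directions separately. For $(\Leftarrow)$, if $P$ is sober then $\Sigma P$ is already its own sobrification (via the identity map), so $\Sigma P$ is Scott sobrifiable with $P$ itself as the witnessing dcpo.

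For the nontrivial direction $(\Rightarrow)$, assume $P$ is bounded-sober and $\Sigma P$ is Scott sobrifiable, so that $\Sigma Q$ is a sobrification of $\Sigma P$ for some dcpo $Q$. By Remark \ref{soberfication}(1) and the uniqueness of sobrifications up to homeomorphism, $\Sigma Q$ is homeomorphic to $Irr(\Sigma P)$ equipped with the hull-kernel topology. A homeomorphism preserves specialization order; since the specialization order on $\Sigma Q$ is $\le_Q$ and the specialization order on $Irr(\Sigma P)$ is $\subseteq$, one gets $(Q, \le_Q) \cong (Irr(\Sigma P), \subseteq)$ as posets. Applying the characterization of Scott spaces stated before Remark \ref{scott soberfiable}, the hull-kernel topology on $Irr(\Sigma P)$ must coincide with the Scott topology of the dcpo $(Irr(\Sigma P), \subseteq)$.

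Set $\eta(P) = \{\downarrow x : x \in P\} \subseteq Irr(\Sigma P)$. I would then establish two facts about $\eta(P)$ inside the Scott space of $(Irr(\Sigma P), \subseteq)$. First, $\eta(P)$ is Scott-dense: by Remark \ref{soberfication}(1) every hull-kernel closed subset of $Irr(\Sigma P)$ has the form $h(A)$ with $A$ Scott-closed in $P$, and the smallest such $h(A)$ containing $\eta(P)$ forces $A = P$, so the hull-kernel closure of $\eta(P)$ is all of $Irr(\Sigma P)$, which equals Scott closure by the previous paragraph. Second, $\eta(P)$ is Scott-closed: it is a lower set because any $G \in Irr(\Sigma P)$ with $G \subseteq \downarrow x$ is upper bounded by $x$, whence bounded soberness forces $G = \downarrow y$ for some $y \in P$; and it is closed under directed suprema because for a directed family $\{\downarrow x_i\}$ in $Irr(\Sigma P)$ the family $\{x_i\}$ is directed in $P$ (which is a dcpo), and Lemma \ref{sup of directed set of points} identifies $\sup_i \downarrow x_i$ with $\downarrow(\bigvee_i x_i) \in \eta(P)$. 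A subset that is both Scott-closed and Scott-dense equals the whole ambient space, so $\eta(P) = Irr(\Sigma P)$, meaning every nonempty irreducible closed subset of $\Sigma P$ is the closure of a point, i.e., $P$ is sober.

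The main obstacle I anticipate is the topology-transfer step in the second paragraph: moving from the bare existence of some dcpo $Q$ with $\Sigma Q$ a sobrification of $\Sigma P$ to the concrete equality of the hull-kernel topology and the Scott topology on the specific dcpo $(Irr(\Sigma P), \subseteq)$. Once that identification is secured, bounded soberness controls the lower-set property of $\eta(P)$ and directed completeness of $P$ controls closure under directed suprema, so the conclusion falls out cleanly from the dense-plus-closed argument.
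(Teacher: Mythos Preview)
Your proof is correct and rests on the same key observation as the paper's: bounded soberness makes $\{\downarrow x : x\in P\}$ a Scott-closed lower set of $Irr_{\sigma}(P)$. The paper runs the contrapositive, picking a non-principal irreducible closed $F$ and exhibiting $\mathcal{F}=\{\downarrow x:x\in F\}$ as a Scott-closed subset of $Irr_{\sigma}(P)$ that cannot equal any $h(A)$ (since any closed $A\supseteq F$ forces $F\in h(A)\setminus\mathcal{F}$), then invoking Remark~\ref{scott soberfiable} directly; you run the direct implication, first identifying the hull-kernel and Scott topologies on $Irr(\Sigma P)$ and then applying the closed-and-dense trick to $\eta(P)$. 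The technical core---bounded soberness for the lower-set property, Lemma~\ref{sup of directed set of points} for closure under directed suprema---is identical in both arguments; only the packaging differs. Your worry about the topology-transfer step is unfounded and is in fact exactly the content of Remark~\ref{scott soberfiable}, which you could simply cite: the single bijection $\phi:\Sigma Q\to Irr(\Sigma P)$ furnished by uniqueness of sobrifications is simultaneously a homeomorphism for the hull-kernel topology and (being an order isomorphism of specialization orders) a homeomorphism for the Scott topologies, so the two topologies on $Irr(\Sigma P)$ coincide.
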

\begin{proof} We only need to check  that  if $\Sigma P$ is not sober, then it is not  Scott sobrifiable.
Since $\Sigma\! P$ is not sober, there is a nonempty irreducible closed set $F$ such that $F$ is not the closure of any point.
From the assumption that $\Sigma P$ is bounded-sober, one can verify that the set $\mathcal{F}=\downarrow_{Irr(\Sigma P)}\{cl(\{x\}): x\in F\}$  consists precisely of the elements $cl(\{x\})$  ($x\in F$), and is a Scott closed set of $Irr(\Sigma P)$. But  any closed set $B$ of $\Sigma P$ containing all $cl(\{x\}) (x\in F)$ must contain $F$, thus $h(B)\not=\mathcal{F}$.  By Remark \ref{scott soberfiable},  $\Sigma P$ is not Scott sobrifiable.
\end{proof}

In the following, we shall write $P\cong Q$ if the two posets $P$ and $Q$ are isomorphic.
\begin{thm}\label{tm-sober-bounded sober}
Let  $P$ be a sober dcpo. For any bounded-sober dcpo $Q$, if $C_{\sigma}(P)\cong C_{\sigma}(Q)$ then  $P \cong Q$.
\end{thm}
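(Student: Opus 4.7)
The plan is to leverage the fact that an isomorphism of Scott-closed set lattices descends to information about sobrifications, and then use Lemma~\ref{non-sober bounded sober} to upgrade the hypothesis on $Q$ from bounded-sober to fully sober, at which point the conclusion follows from Remark~\ref{soberfication}(2).

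First, I would translate the hypothesis $C_{\sigma}(P) \cong C_{\sigma}(Q)$ into a statement about the Scott spaces: by definition $C_{\sigma}(P)$ is precisely the closed set lattice of $\Sigma P$ and similarly for $Q$, so we have an isomorphism $C(\Sigma P) \cong C(\Sigma Q)$ of closed set lattices. Since $P$ is sober by assumption, $\Sigma P$ is sober, hence (up to homeomorphism) it is its own sobrification. Applying Remark~\ref{soberfication}(3) to the pair $(\Sigma Q, \Sigma P)$, we obtain that $\Sigma P$ is a sobrification of $\Sigma Q$. In particular, $\Sigma Q$ is Scott sobrifiable in the sense defined just before Remark~\ref{scott soberfiable}.

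Next, I would invoke Lemma~\ref{non-sober bounded sober}: since $Q$ is bounded-sober and $\Sigma Q$ is Scott sobrifiable, we conclude that $Q$ is sober. Now both $\Sigma P$ and $\Sigma Q$ are sober spaces with isomorphic closed set lattices, so by Remark~\ref{soberfication}(2) their sobrifications are homeomorphic; but each of them is its own sobrification, so $\Sigma P$ and $\Sigma Q$ are themselves homeomorphic. The specialization order on the Scott space of a dcpo coincides with the original order (as noted in the discussion preceding Remark~\ref{scott soberfiable}), so a homeomorphism $\Sigma P \to \Sigma Q$ is automatically an order isomorphism of the underlying posets, yielding $P \cong Q$.

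The only nontrivial step here is the transition from ``Scott sobrifiable plus bounded-sober'' to ``sober'' for $Q$, but that is exactly what Lemma~\ref{non-sober bounded sober} supplies, so there is no real obstacle; everything else is bookkeeping with the facts recorded in Remark~\ref{soberfication}. I do not anticipate any case analysis or delicate construction being needed.
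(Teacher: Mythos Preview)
Your proposal is correct and follows essentially the same route as the paper: use Remark~\ref{soberfication}(3) to see that $\Sigma P$ is a sobrification of $\Sigma Q$, apply Lemma~\ref{non-sober bounded sober} to upgrade $Q$ from bounded-sober to sober, and then conclude via Remark~\ref{soberfication}(2). Your only addition is making explicit why a homeomorphism $\Sigma P \to \Sigma Q$ yields $P \cong Q$, which the paper leaves implicit.
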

\begin{proof}
Let $Q$ be a bounded-sober dcpo such that $C_{\sigma} (P)\cong C_{\sigma} (Q)$. Then, by Remark \ref{soberfication} (3),  $\Sigma\! P$ is a sobrification  of $\Sigma Q$. Thus $\Sigma Q$ is Scott sobrifiable. By Lemma \ref{non-sober bounded sober}, $\Sigma Q$  is also sober.  Therefore, by Remark \ref{soberfication} (2),   $\Sigma P$ and $\Sigma Q$ are homeomorphic, which then implies $P\cong Q$.
\end{proof}

\begin{dfn} An element $a$ of a poset $P$ is called down-linear if the subposet $\dr\!\! a=\{x\in P: x\le a\}$ is a chain (for any $x_1, x_2\in \dr\!\! a$, it holds that either $x_1\le x_2$ or $x_2\le x_1$).
\end{dfn}

\begin{lem}\label{irr set with liear down set}
Let $(X, \tau)$ be a d-space.
\begin{enumerate}[label=(\arabic*)]
\item If  $F\in Irr(X)$ is a down-linear element of the poset $Irr(X)$, then there exists an  $x\in X$ such that $F=cl(\{x\})$.

\item If $F\in Irr(X)$ equals the supremum of a directed set of  down-linear elements of $Irr(X)$, then $F=cl(\{x\})$ for some $x\in X$.
\end{enumerate}
\end{lem}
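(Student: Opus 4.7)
The plan for part~(1) is to show that $F$, viewed through the specialization order $\le_\tau$, is a chain, use the d-space hypothesis to place its supremum inside $F$, and then verify that this supremum is a generic point for $F$. In detail: for every $x \in F$, Remark~\ref{irreducible sets}(2) gives $cl(\{x\}) \in Irr(X)$, and $cl(\{x\}) \subseteq F$ since $F$ is closed. Down-linearity of $F$ in $Irr(X)$ then forces the family $\{cl(\{x\}) : x \in F\}$ to be a chain under inclusion. Because $\subseteq$ between principal closures is precisely the specialization order, $F$ itself is a chain under $\le_\tau$, and in particular directed. Remark~\ref{d-spaces}(3) says $F$ is Scott closed in the dcpo $(X, \le_\tau)$, so $x := \bigvee F$ exists and lies in $F$. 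Finally, $cl(\{x\}) \subseteq F$ because $F$ is closed and $x \in F$, while each $y \in F$ satisfies $y \le_\tau x$, i.e.\ $y \in cl(\{x\})$; hence $F = cl(\{x\})$.

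For part~(2), write $F$ as the supremum in $Irr(X)$ of a directed family $\{F_i : i \in I\}$ of down-linear elements. Part~(1) gives $F_i = cl(\{x_i\})$ for each $i$. The correspondence $cl(\{x_i\}) \subseteq cl(\{x_j\}) \iff x_i \le_\tau x_j$ translates directedness of $\{F_i\}_{i\in I}$ under inclusion into directedness of $\{x_i : i \in I\}$ under $\le_\tau$ in the dcpo $(X, \le_\tau)$. Lemma~\ref{sup of directed set of points} then yields $F = \sup\{cl(\{x_i\}) : i \in I\} = cl(\{x\})$, where $x = \bigvee_{i \in I} x_i$, as required.

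The expected obstacle is minor and purely conceptual: carefully translating the chain condition in $Irr(X)$ to a chain condition on the elements of $F$ via the bijection $x \mapsto cl(\{x\})$ on principal closures (which uses $T_0$-ness), and recognizing that the d-space hypothesis, in the form of Remark~\ref{d-spaces}(3), is exactly what ensures the directed supremum remains inside $F$. Once these points are in place, part~(2) reduces almost immediately to Lemma~\ref{sup of directed set of points}.
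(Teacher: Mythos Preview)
Your argument is correct and follows essentially the same route as the paper's proof: in part~(1) you pass from down-linearity of $F$ in $Irr(X)$ to $F$ being a chain under $\le_\tau$, use the d-space hypothesis and Remark~\ref{d-spaces}(3) to get $\widehat{x}=\bigvee F\in F$, and conclude $F=cl(\{\widehat{x}\})$; in part~(2) you apply part~(1) to each $F_i$ and then invoke Lemma~\ref{sup of directed set of points}, exactly as the paper does.
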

\begin{proof}\leavevmode
\begin{enumerate}[label=(\arabic*)]
\item First, the set $\{cl(\{x\}): x\in F\}$ is a subset of $\dr\!\! F$ in $Irr(X)$, so it is a chain. Thus $\{x: x\in F\}$ is a chain of $(X, \le_{\tau})$. Since $X$ is a d-space, $\widehat{x}=\sup\{x: x\in F\}$ exists.
Then, noticing that $F$ is closed, we have $\widehat{x}\in F$ by Remark \ref{d-spaces} (3). Then $F\subseteq cl(\{\widehat{x}\})\subseteq F$, implying $cl(\{\widehat{x}\})=F$.

\item Let $F$ be the supremum of a directed set of down-linear irreducible closed sets in $Irr(X)$.  Then by (1),  $F=\sup\{cl(\{x_i\}): i\in I\}$ in $Irr(X)$,
 where $\{cl(\{x_i\}): i\in I\}$ is a directed family. Thus, $\{x_i: i\in I\}$
 is a directed set of  $(X, \le_{\tau})$. Again, as $X$ is a d-space,
 $x=\sup\{x_i: i\in I\}$ exists. By Lemma \ref{sup of directed set of points},
 $cl(\{x\})=\sup\{cl(\{x_i\}): i\in I\}=F$. \qedhere
\end{enumerate}
\end{proof}
\noindent In the following, for a dcpo $P$, we shall use $Irr_{\sigma}(P)$ to denote the dcpo of all nonempty irreducible Scott closed subsets of $P$.
Without specification, irreducible sets of a poset mean the irreducible sets with respect to the Scott topology.

\begin{thm}\label{main theorem}
Let $P$ be a dcpo  satisfying the following condition
\begin{description}
\item[(DL-sup)] for any \it{proper} irreducible Scott closed set $F$, $F$  is either a down-linear element of $Irr_{\sigma}(P)$  or it is the supremum of a directed set of down-linear elements of $Irr_{\sigma}(P)$.
\end{description}
Then $P$ is $C_{\sigma}$-unique.
\end{thm}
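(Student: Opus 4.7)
The plan is to transfer the (DL-sup) property across the lattice isomorphism, deduce that both $P$ and $Q$ are bounded-sober, then split on whether $\Sigma P$ is sober, using Theorem~\ref{tm-sober-bounded sober} in the sober case and building an explicit point-by-point isomorphism in the non-sober case.

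Given a dcpo $Q$ with an isomorphism $\phi\colon C_\sigma(P)\to C_\sigma(Q)$, I would first observe by Remark~\ref{irreducible sets}(3) that $\phi$ restricts to a dcpo isomorphism $\bar\phi\colon Irr_\sigma(P)\to Irr_\sigma(Q)$ (directed suprema in $Irr_\sigma$ are computed in the ambient closed-set lattice). The three ingredients of (DL-sup)---being down-linear, being a supremum of a directed subset of down-linear elements, and being a \emph{proper} element of $Irr_\sigma$ (i.e.\ strictly below the top of the ambient $C_\sigma$)---are all purely order-theoretic and hence preserved by $\bar\phi$, so $Q$ also satisfies (DL-sup). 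Combining this with Lemma~\ref{irr set with liear down set} (applied to the d-spaces $\Sigma P$ and $\Sigma Q$ via Remark~\ref{d-spaces}(2)), every proper irreducible Scott-closed subset of $P$ (resp.\ of $Q$) is of the form $cl(\{x\})=\downarrow x$ for a unique point. A direct check then shows that both $P$ and $Q$ are bounded-sober: a nonempty upper-bounded irreducible closed set is either proper (hence a point closure by the above) or equals the whole space, in which case the upper bound is a top and the space is again a point closure.

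I would then split into two cases. If $\Sigma P$ is sober, Theorem~\ref{tm-sober-bounded sober} applied to the sober $P$ and the bounded-sober $Q$ yields $P\cong Q$ at once. Otherwise, some nonempty irreducible Scott-closed subset of $P$ fails to be a point closure; by the previous paragraph it can only be $P$ itself, so $P$ is irreducible and has no top, and hence $\downarrow x$ is a proper element of $Irr_\sigma(P)$ for every $x\in P$. A symmetric application of Theorem~\ref{tm-sober-bounded sober} (with $Q$ now playing the role of the sober dcpo) rules out $\Sigma Q$ being sober, for otherwise $P\cong Q$ would force $\Sigma P$ sober too; hence $\downarrow y$ is likewise proper in $Irr_\sigma(Q)$ for every $y\in Q$. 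Define $f\colon P\to Q$ by $f(x)=y$ where $\bar\phi(\downarrow x)=\downarrow y$; this is well-defined by (DL-sup) plus Lemma~\ref{irr set with liear down set}, bijective by the symmetric construction of its inverse, and order-preserving since $x_1\le x_2 \Leftrightarrow \downarrow x_1\subseteq\downarrow x_2 \Leftrightarrow \bar\phi(\downarrow x_1)\subseteq\bar\phi(\downarrow x_2) \Leftrightarrow f(x_1)\le f(x_2)$.

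The main obstacle is verifying the order-theoretic transfer of (DL-sup): one must check that ``down-linear'' and ``supremum of a directed subset of down-linear elements'' depend only on the poset structure of $Irr_\sigma$, not on any additional topological data. A secondary subtlety is the case split---in particular, ruling out the asymmetric possibility that $\Sigma P$ is non-sober while $\Sigma Q$ is sober, which is precisely where the symmetric application of Theorem~\ref{tm-sober-bounded sober} is needed.
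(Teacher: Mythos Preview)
Your proposal is correct and follows essentially the same route as the paper's proof: transfer (DL-sup) through the lattice isomorphism, invoke Lemma~\ref{irr set with liear down set} to see that every proper irreducible Scott-closed set in each of $P$ and $Q$ is a point closure, check bounded-sobriety, and then split on sobriety. The only cosmetic difference is that the paper phrases the case split as ``if \emph{either} $\Sigma P$ or $\Sigma Q$ is sober, apply Theorem~\ref{tm-sober-bounded sober}; otherwise both are non-sober'', which avoids your detour through a contradiction in the asymmetric subcase, and in the non-sober case the paper writes the isomorphism as the chain $P\cong Irr_\sigma(P)\setminus\{P\}\cong Irr_\sigma(Q)\setminus\{Q\}\cong Q$ rather than constructing your map $f$ pointwise---but these are the same argument.
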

\begin{proof}
Let dcpo $P$ satisfy the above condition (DL-sup) and $Q$ be a dcpo such that $C_{\sigma}(P)\cong~C_{\sigma}(Q)$.
\begin{enumerate}[label=(\arabic*)]
\item By Lemma \ref{irr set with liear down set}, if $F\in Irr_{\sigma}(P)$ and $F\not=P$, then $F=cl(\{x\})$ for some point.

\item Since $C_{\sigma}(P)\cong C_{\sigma}(Q)$, $Q$ also satisfies condition  (DL-sup). So every nonempty closed irreducible proper subset of $\Sigma\,Q$ is  the closure of a point.

\item Let $F$ be a nonempty irreducible closed subset of $P$ with an upper bound $a$. If $F\not=P$, then $F$ is the closure of some point by (1). Otherwise $F=P$, thus $a\in P$ is the largest element in $P$, hence $F=P=\dr\!a=cl(\{a\})$. Therefore $\Sigma P$ is bounded-sober. Similarly $\Sigma Q$ is bounded-sober.
If either $\Sigma P$ or $\Sigma Q$ is sober, then by Theorem
\ref{tm-sober-bounded sober}, $P\cong Q$. Assume now that neither  $\Sigma\!P$
nor $\Sigma\!Q$  is sober. Then there is a  nonempty irreducible closed set $F$
of $P$, which is not the closure of a singleton set. But by (1) and (2),  $F$
cannot be a proper subset, so $F=P$. Thus $P$ is an irreducible closed set which
does not equal to the closure of any singleton set. Similarly, $Q$ is an
irreducible closed set which is not the closure of any singleton set. Note that
in this case, $P$ and $Q$ are the top  elements of $Irr_{\sigma}(P)$ and
$Irr_{\sigma}(Q)$, respectively. Thus $Q\cong\{cl(\{y\}): y\in Q\}\cong
Irr_{\sigma}(Q)-\{Q\}\cong Irr_{\sigma}(P)-\{P\}\cong\{cl(\{x\}): x\in P\}\cong
P$, as desired. \qedhere
\end{enumerate}

\end{proof}

\begin{exam}\label{ex-JS-non-sober}
In \cite{johnstone-81}, Johnstone constructed the first non-sober dcpo  as $X=\mathbb{N}\times(\mathbb{N}\cup \{\infty\})$ with partial order defined by
$$ (m, n)\le (m', n') \Leftrightarrow \mbox{either } m=m' \mbox{ and } n\le n'\\
\mbox{or } n'=\infty \mbox{ and } n\le m'.$$
Then
\begin{enumerate}[label=(\alph*)]
\item $(X, \leq)$ is a dcpo, $X$ is irreducible and $X\not=cl(\{x\})$ for any $x\in X$.

\item If $F$ is a proper irreducible Scott closed set of $X$, then $F=\dr\!\!(m, n)$ for some $(m, n)\in X$.

\item If $n\not=\infty$, $\dr\!\!(m, n)$ is a down-linear element of $Irr_{\sigma}(X)$. If $n=\infty$, then $\dr (m, n)$ is the supremum of the
chain $\{\dr\!\!(m, k): k \not=\infty\}$ whose members are down-linear.
\end{enumerate}
Hence by Theorem \ref{main theorem}, we deduce that  dcpo $X=\mathbb{N}\times(\mathbb{N}\cup \{\infty\})$ is $C_{\sigma}$-unique.
Thus an $C_{\sigma}$-unique dcpo need not be sober.
\end{exam}

Next, we provide  a  class of $C_{\sigma}$-unique dcpos via quasicontinuous elements.

\begin{rmk}[cf.~\cite{lawxu}]\label{xu-lawson}
Let $A$ be a nonempty Scott closed set of a dcpo $P$. Then
\begin{enumerate}[label=(\roman*)]
\item $A$ is a dcpo.

\item For any subset $B\subseteq A$, $B$ is a Scott closed set of dcpo $A$ iff it is a Scott closed set of $P$. Thus $C_{\sigma}(A)=\dr_{C_{\sigma}(P)}A=\{B\in C_{\sigma}(P): B\subseteq A\}$.
\end{enumerate}
\end{rmk}

\noindent A finite subset $F$ of a dcpo $P$ is way-below an element $a\in P$, denoted by $F\ll a $, if for any directed subset $D\subseteq P$, $a\le \bigvee D$ implies $D\cap\!\!\ur\!\! F\not=\emptyset$. A dcpo $P$ is quasicontinuous if for any $x\in P$, the family
$$ \mathit{fin}(x)=\{F: F \mbox{ is finite and } F\ll x\}$$
is a directed family (for any $F_1, F_2\in \mathit{fin}(x)$ there is $F\in \mathit{fin}(x)$ such that $F\subseteq \ur F_1\cap {\ur F_2}$)  and
for any $x\not\leq y $ there is $F\in \mathit{fin}(x)$ satisfying $y\not\in \ur F$ (see Definition III-3.2 of \cite{comp}). Every continuous dcpo is quasicontinuous.

Every quasicontinuous dcpo is sober (Proposition III-3.7 of \cite{comp}). A dcpo $P$ is quasicontinuous iff the Scott open set lattice $\sigma(P)$  of $P$ is hypercontinuous (Theorem VII-3.9 of \cite{comp}). Assume that $P$ is a quasicontinuous dcpo and $Q$ is a dcpo such that $C_{\sigma}(P)$ is isomorphic to $C_{\sigma}(Q)$. Then
$\sigma(P)$ (it is dual to $C_{\sigma}(P)$) is isomorphic to $\sigma(Q)$(it is dual to $C_{\sigma}(Q)$, thus $\sigma(Q)$ is also hypercontinuous, implying that $Q$ is quasicontinuous. Thus both $\Sigma\!P$ and $\Sigma\!Q$ are sober spaces and they have isomorphic closed  set lattices, hence by Theorem \ref{tm-sober-bounded sober}, we have $P\cong Q$.
From this  we obtain  the following lemma.

\begin{lem}\label{quasicontinuous is scl-faithul}
Every quasicontinuous dcpo is $C_{\sigma}$-unique.
\end{lem}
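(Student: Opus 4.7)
The plan is to reduce $C_{\sigma}$-uniqueness of a quasicontinuous dcpo $P$ to the sober case already handled by Theorem \ref{tm-sober-bounded sober}. Start with any dcpo $Q$ satisfying $C_{\sigma}(P)\cong C_{\sigma}(Q)$. The first step is to pass from closed sets to open sets: since $\sigma(P)$ and $\sigma(Q)$ are the order-duals of $C_{\sigma}(P)$ and $C_{\sigma}(Q)$ respectively, any lattice isomorphism between the closed set lattices dualises to a lattice isomorphism $\sigma(P)\cong\sigma(Q)$. This is the same observation already used in Remark \ref{irreducible sets}(3), so no new work is required.

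Next I would invoke the characterisation cited just above the lemma (Theorem VII-3.9 of \cite{comp}): a dcpo is quasicontinuous if and only if its lattice of Scott open sets is hypercontinuous. Hypercontinuity is a purely lattice-intrinsic property, so from $\sigma(P)\cong\sigma(Q)$ and the hypercontinuity of $\sigma(P)$ (which holds because $P$ is quasicontinuous), one concludes that $\sigma(Q)$ is hypercontinuous as well, and hence that $Q$ is itself quasicontinuous.

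With both $P$ and $Q$ now known to be quasicontinuous, Proposition III-3.7 of \cite{comp} gives that $\Sigma P$ and $\Sigma Q$ are both sober, so in particular bounded-sober. Theorem \ref{tm-sober-bounded sober} then applies directly and yields $P\cong Q$, which is precisely the statement that $P$ is $C_{\sigma}$-unique.

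There is no genuine obstacle in this argument once the two black-box results from \cite{comp} — the hypercontinuity characterisation of quasicontinuity and the sobriety of quasicontinuous dcpos — are available. The only minor point to verify carefully is that hypercontinuity really is a lattice-intrinsic property (so that it transfers across the isomorphism $\sigma(P)\cong\sigma(Q)$), but this is immediate from the definition, which refers only to the internal order structure.
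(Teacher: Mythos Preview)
Your proposal is correct and follows essentially the same route as the paper: dualise the closed-set isomorphism to an open-set isomorphism, transfer hypercontinuity of $\sigma(P)$ to $\sigma(Q)$ via Theorem~VII-3.9 of \cite{comp} to conclude $Q$ is quasicontinuous, then invoke sobriety of quasicontinuous dcpos (Proposition~III-3.7 of \cite{comp}) and finish with Theorem~\ref{tm-sober-bounded sober}. The paper's argument is exactly this, presented in the paragraph immediately preceding the lemma.
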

\smallskip

An element $x$ of a dcpo $P$ is called a quasicontinuous element if the sub-dcpo $\dr\!\! x$ is a quasicontinuous dcpo.

\begin{thm}\label{quasicontinuous elements}
 Let $P$ be a dcpo. Then $P$ is $C_{\sigma}$-unique if it satisfies the following two conditions:
\begin{enumerate}[label=(\arabic*)]
\item $\Sigma P$ is bounded sober;

\item every element of $P$ is the supremum of a directed set of quasicontinuous elements.
\end{enumerate}
\end{thm}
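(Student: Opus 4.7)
The plan is to mirror the strategy used in the proof of Theorem \ref{main theorem}, with quasicontinuity in place of down-linearity, and with bounded sobriety (hypothesis (1)) handling the case analysis at the end.

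First I would establish a quasicontinuous analog of Lemma \ref{irr set with liear down set}: if $F\in Irr_{\sigma}(P)$, viewed as a sub-dcpo of $P$, is quasicontinuous — equivalently, $F=cl(\{x\})$ for some quasicontinuous element $x$ of $P$ — or if $F$ is a directed supremum in $Irr_{\sigma}(P)$ of such sets, then $F=cl(\{x\})$ for some $x\in P$. For the first case, Remark \ref{xu-lawson} identifies the Scott-closed subsets of $F$ (as a sub-dcpo) with the Scott-closed subsets of $P$ contained in $F$, so the Scott topology on $F$ coincides with its subspace topology in $\Sigma P$; since a quasicontinuous dcpo is sober (Proposition III-3.7 of \cite{comp}), the irreducibility of $F$ in $\Sigma P$ descends to $\Sigma F$ and yields an $x\in F$ with $F=cl_F(\{x\})=cl_P(\{x\})$, the last equality because $F$ is a lower set. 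The directed-supremum case goes through exactly as in Lemma \ref{irr set with liear down set}(2), using Lemma \ref{sup of directed set of points}.

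Next, by condition (2) together with Lemma \ref{sup of directed set of points}, the image $\phi(P):=\{cl(\{x\}):x\in P\}$ in $Irr_{\sigma}(P)$ coincides with the directed-supremum closure of the set of \emph{quasi-closures} $\mathcal Q(P):=\{cl(\{x\}):x\in P,\,x\text{ quasicontinuous}\}$. The key point is that being a quasi-closure is lattice-intrinsic to $C_{\sigma}(P)$: by Theorem VII-3.9 of \cite{comp}, quasicontinuity of the sub-dcpo $F$ is equivalent to hypercontinuity of the dual of the principal ideal $\{A\in C_{\sigma}(P):A\subseteq F\}$. Hence, for any dcpo $Q$ with $C_{\sigma}(P)\cong C_{\sigma}(Q)$, the induced isomorphism $\alpha:Irr_{\sigma}(P)\cong Irr_{\sigma}(Q)$ sends $\mathcal Q(P)$ bijectively onto $\mathcal Q(Q)$, and since $\alpha$ preserves directed suprema, it sends $\phi(P)$ bijectively onto $T_Q$, the directed-supremum closure of $\mathcal Q(Q)$ in $Irr_{\sigma}(Q)$. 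Because $\mathcal Q(Q)\subseteq\psi(Q):=\{cl(\{y\}):y\in Q\}$ and $\psi(Q)$ is itself closed under directed suprema (as $\psi$ preserves them), $T_Q\subseteq\psi(Q)$, giving an order-embedding $P\cong\phi(P)\cong T_Q\hookrightarrow\psi(Q)\cong Q$.

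It remains to upgrade this embedding to an isomorphism by showing $T_Q=\psi(Q)$, and here I would follow the case split of step (3) of the proof of Theorem \ref{main theorem}. If $\Sigma P$ is sober, then $\phi(P)=Irr_{\sigma}(P)$, so $T_Q=Irr_{\sigma}(Q)\supseteq\psi(Q)$; once bounded sobriety of $\Sigma Q$ is verified by transferring (1) through $\alpha$ via the identification $\phi(P)=T_P$, Theorem \ref{tm-sober-bounded sober} concludes $P\cong Q$. If $\Sigma P$ is not sober, suppose towards a contradiction that $\da y\in\psi(Q)\setminus T_Q$; then $F:=\alpha^{-1}(\da y)\in Irr_{\sigma}(P)\setminus\phi(P)$ is an irreducible Scott-closed set of $P$ that is not a closure of a point, so by (1) it is not upper bounded in $P$. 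Translating this non-boundedness through $\alpha$ forces $\da y$ to lie outside every element of $T_Q$, contradicting the structure of $T_Q$ as a directed-supremum closure containing enough quasi-closures to approximate $y$ from below (as supplied by the iso-image of (2)).

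The main obstacle is this last step — pinning down the equality $T_Q=\psi(Q)$. The lattice isomorphism transfers quasi-closures and their directed-supremum closure freely, but ruling out ``extra'' closures of points in $Q$ beyond $T_Q$ requires a careful translation of bounded sobriety across $\alpha$, exploiting that upper bounds of $\da y$ in $Q$ must correspond under $\alpha^{-1}$ to upper bounds in $P$ drawn from the subset $\phi(P)=T_P$.
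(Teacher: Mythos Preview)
Your plan is the paper's: build an order-embedding $P\hookrightarrow Q$ by sending each $\da x$ to the point-closure it corresponds to under the lattice isomorphism, then prove surjectivity. Your construction of the embedding via $\mathcal Q(P)\mapsto\mathcal Q(Q)$ and directed suprema is equivalent to the paper's element-wise definition of $f:P\to Q$. The gap is in surjectivity, which you yourself flag as the obstacle.

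Your case-2 contradiction does not close. From $\da y\notin T_Q$ you correctly deduce that $\da y$ has no upper bound inside $T_Q$, but nothing you have contradicts this: you cannot invoke ``the iso-image of (2)'' to approximate $y$ from below by quasicontinuous elements of $Q$, because condition (2) is about $P$, and through $\alpha$ it says only that members of $T_Q$ are directed suprema from $\mathcal Q(Q)$---it is silent about $\da y$ once $\da y\notin T_Q$. (Your aside in case~1 about transferring bounded sobriety to $Q$ via Theorem~\ref{tm-sober-bounded sober} is similarly circular, though the direct argument you give there is fine.) The paper avoids the case split and uses bounded sobriety of $P$ differently. Writing $S=f(P)=\{y\in Q:\da y\in T_Q\}$, one shows $S$ is a \emph{lower set} of $Q$: if $y\le f(x)$ then $\alpha^{-1}(\da y)\subseteq\da x$ is a bounded irreducible Scott-closed set of $P$, hence by hypothesis (1) equals some $\da x'$, so $y=f(x')\in S$. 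As $S$ is also closed under directed suprema, $S\in C_\sigma(Q)$; and since the isomorphism sends top to top, $Q=\sup_{C_\sigma(Q)}\{\da f(x):x\in P\}\subseteq S$, whence $S=Q$. In your language: bounded sobriety makes $T_P=\phi(P)$ a lower set of $Irr_\sigma(P)$, hence $T_Q$ a lower set of $Irr_\sigma(Q)$; then $S$ is Scott closed in $Q$ and the top-to-top argument finishes. You had this ingredient in hand but aimed it at a contradiction that is not there.
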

\begin{proof}
Assume that $P$ is a dcpo satisfying the two conditions. Let $Q$ be a dcpo and $F: C_{\sigma}(P)\lra C_{\sigma}(Q)$ be an isomorphism. Then $F$ restricts to an isomorphism $F: Irr_{\sigma}(P)\lra Irr_{\sigma}(Q)$.
\begin{enumerate}[label=(\arabic*)]
\item Let $x\in P$ be a quasicontinuous element. Then $F(\dr\!x)$ is in $C_{\sigma}(Q)$ and, by Remark \ref{xu-lawson}, $C_{\sigma}(\dr\!x)=\{B\in C_{\sigma}(P): B\subseteq \dr\! x\}={\dr_{C_{\sigma}(P)}(\dr\!x)}$ is isomorphic via $F$ to $\dr_{C_{\sigma}(Q)}F(\dr\!x)=\{E\in C_{\sigma}(Q): E\subseteq F(\dr\!\!x)\}=C_{\sigma}(F(\dr\!x))$ (all Scott closed sets of $F(\dr\!x)$). Since the dcpo $\dr\!x$ is quasicontinuous, it is $C_{\sigma}$-unique. Hence the dcpo $\dr\!x$ is isomorphic to the dcpo $F(\dr x)$, implying that there is a largest element in $F(\dr\!x)$, denoted by $f(x)$. Hence $F(\dr\!x)=\dr\!f(x)$. It is easily observable that the mapping $f$ is well defined on the set of quasicontinuous elements of $P$, and for any two quasicontinuous elements $x_1, x_2\in P$, $f(x_1)\le f(x_2)$ iff $x_1\le x_2$.

\item If $x\in P$ is the supremum of a directed set $\{x_i: i\in I\}$ of quasicontinuous elements $x_i$, then
 \[\begin{array}{lll}
 F(\downarrow\!x)&=&F(\sup_{Irr_{\sigma}(P)}\{\downarrow\!x_i: i\in I\})\\
 &=& \sup_{Irr_{\sigma}(Q)}\{F(\downarrow\!x_i): i\in I\}\\
 &=&\sup_{Irr_{\sigma}(Q)}\{\downarrow\!f(x_i): i\in I\}\\
 &=&\downarrow\!y_x, \end{array}\]
 where $y_x=\sup_{Q}\{f(x_i): i\in I\}$ and $f(x_i)$ is the element in $Q$ defined for quasicontinuous elements $x_i$ in (1). Let $f(x)=y_x$ again.

Thus we have a monotone mapping $f: P\longrightarrow Q$. Following that $F$ is an isomorphism, we have that $f(x_1)\ge f(x_2)$ iff $x_1\ge x_2$.
It remains  to show that $f$ is surjective.

\item If $y\in\dr\!\!f(P)$, then $\downarrow\!\!y\subseteq \dr\!\!f(x)=F(\downarrow\!\!x)$ for some $x\in P$. Since $F$ restricts to an isomorphism between the dcpos $Irr_{\sigma}(P)$ and $Irr_{\sigma}(Q)$, there is $H\in Irr_{\sigma}(P)$ such that  $H\subseteq \downarrow\!x$ and  $F(H)=\downarrow\!y$.  But $P$ is bounded-sober, so $H=\downarrow\!x'$ for some $x'\in P$. It follows that $y=f(x')$, implying $y\in f(P)$. Therefore
 $f(P)$ is a lower  set of $Q$. Also clearly $f(P)$ is closed under sups of directed set, so it is a Scott closed subset of $Q$.

\item Since $F$ is an isomorphism between the lattices $C_{\sigma}(P)$ and $C_{\sigma}(Q)$, $P$ and $Q$ are the top elements in the respective lattices, we have that   $Q=F(P)=F(\sup_{C_{\sigma}(P)}\{\downarrow\!x:  x\in P\})=\sup_{C_{\sigma}(Q)}\{F(\downarrow\!x): x\in P\}=\sup_{C_{\sigma}(Q)}\{\downarrow\!f(x): x\in P\}$.

  For each $x\in P$, $\downarrow\!f(x)\subseteq f(P)$ because  $f(P)$ is a Scott
  closed set of $Q$, it holds then that $sup_{C_{\sigma}(Q)}\{\downarrow\!f(x):
  x\in P\}\subseteq f(P)$. Therefore $Q\subseteq f(P)$, which implies $Q=f(P)$.
  Hence  $f$ is also surjective. The proof is thus completed. \qedhere
\end{enumerate}
\end{proof}

\noindent If $x\in P$ is a down-linear element of a dcpo $P$, then $\dr\!x$ is a chain, so it is continuous (hence quasicontinuous).

\begin{cor}\label{tm-c1-c2-faithful}
If $P$ is a dcpo satisfying the following conditions, then $P$ is $C_{\sigma}$-unique:
\begin{enumerate}[label=(\arabic*)]
\item $P$ is bounded-sober.

\item every element $a\in P$ is the supremum of a directed set of down-linear elements.
\end{enumerate}
\end{cor}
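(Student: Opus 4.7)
The plan is to deduce this corollary directly from Theorem \ref{quasicontinuous elements} by showing that down-linear elements are a special case of quasicontinuous elements, so the hypotheses of the corollary are strictly stronger than those of the theorem.

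First, I would verify the reduction: if $a \in P$ is down-linear, then $\da a$ is a chain. Since $P$ is a dcpo and $\da a$ is a lower set bounded above by $a$, any directed (equivalently, chain) subset of $\da a$ has a supremum in $P$ that lies below $a$, hence inside $\da a$; so $\da a$ is itself a dcpo. A chain is a continuous dcpo in a standard way (for any $x$ in the chain, the principal ideal $\da x$ is directed and sups to $x$, and each element of $\da x$ is way-below $x$ in the chain), and every continuous dcpo is quasicontinuous. Thus every down-linear element of $P$ is a quasicontinuous element.

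Next, I would observe that condition (2) of the corollary -- every $a \in P$ is the supremum of a directed set of down-linear elements -- immediately yields condition (2) of Theorem \ref{quasicontinuous elements}, namely that every $a \in P$ is the supremum of a directed set of quasicontinuous elements. Condition (1) of the corollary (bounded-sobriety of $\Sigma P$) is literally condition (1) of the theorem.

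With both hypotheses of Theorem \ref{quasicontinuous elements} in place, the conclusion that $P$ is $C_\sigma$-unique follows at once. I do not anticipate any real obstacle here; the only thing to be careful about is making the argument that a chain is continuous (and that $\da a$ really is a dcpo) fully explicit, after which the corollary is a one-line application of the preceding theorem.
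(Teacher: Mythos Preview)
Your proposal is correct and follows exactly the paper's own approach: the paper simply notes (in the sentence immediately preceding the corollary) that a down-linear element $x$ has $\da x$ a chain, hence continuous, hence quasicontinuous, so the corollary is an immediate specialization of Theorem~\ref{quasicontinuous elements}. Your write-up is just a slightly more detailed version of that one-line reduction.
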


\begin{exam}\label{ex-Kou-non-sob}
In order to answer the question whether every well-filtered dcpo is sober posed by Heckmann~\cite{heckmann},  Kou~\cite{kouhui} constructed another non-sober dcpo $P$ as follows:

Let  $X=\{x\in\mathbb{R}: 0 < x \le 1\}$,  $P_0=\{(k, a, b)\in \mathbb{R}: 0 < k <1,  0 < b \le a \leq 1\}$  and
$$P=X\cup P_0.$$
Define the partial order $\sqsubseteq$ on $P$ as follows:
\begin{enumerate}[label=(\roman*)]
\item  for $x_1, x_2\in X$, $x_1\sqsubseteq x_2$ iff $x_1=x_2$;

\item $(k_1, a_1, b_1) \sqsubseteq (k_2, a_2, b_2)$ iff $k_1\le k_2, a_1=a_2$ and $b_1=b_2$.

\item $(k, a, b) \sqsubseteq x $ iff  $a=x$ or  $kb\le x < b$.
\end{enumerate}
If $u=(h, a, b)\in P_0$, then $\dr\!\!u=\{(k, a, b): k\leq h\}$ is a chain. If $u=x\in P_0$, then $u = \bigvee\{(k, x, x): 0<k<1\}$, where each $(k, x, x)$ is a down-linear element and $\{(k, x, x): 0<k<1\}$ is a chain. Thus $P$ satisfies (2) of Corollary \ref{tm-c1-c2-faithful}.

Let $F$ be an irreducible nonempty Scott closed set of $P$ with an upper bound $v$. If $v=(h, a, b)\in P_0$, then $F\subseteq \dr\!\!(h, a, b)=\{(k, a, b): k\leq h\}$. Take $m=\bigvee\{k: (k, a, b)\in F\}$. Then $F=\dr\!\!(m, a, b)$, is the closure of  point $(m, a, b)$.

Now assume that $F$ does not have an upper bound in $P_0$, then $v=x$ for some $x\in P_0$. If $v\not\in F$, then due to the irreducibility of $F$, there exist $a, b$ such that $F\subseteq \{(k,a, b): 0<k<1\}$, which will imply that $F$ has an upper bound of the form $(m, a, b)$, contradicting the assumption. Therefore $v\in F$, implying that $F=\dr\!\!v$ (note that $F=\dr\!\!F$ is a lower set) is the  closure of point $v$.
It thus follows that $P$ satisfies (1) as well. By Corollary \ref{tm-c1-c2-faithful}, $P$ is $C_{\sigma}$-unique.
\end{exam}


Next, we give another class of $C_{\sigma}$-unique dcpos.
In \cite{Ho-Zhao}, Ho and Zhao introduced the following notions.
\begin{dfn}\label{dn-beneath-c-compact}
  Let $L$ be a poset and $x, y \in L$. The element  $x$ is {\em beneath} $y$,
denoted by $x\prec y$, if for every nonempty Scott-closed set $S\subseteq L$ with
$\bigvee S$ existing, $y\le \bigvee S$   implies  $x\in S$.
An element $x$ of $L$ is called {\em C-compact} if $x\prec x$.
Let $\kappa(L)$  denote the set of all the C-compact elements of $L$.
\end{dfn}

Let $P$ be a poset and $A\subseteq P$ finite.  The set $mub(A)$ of  minimal upper bounds of $A$ is said to be complete, if for any upper bound $x$ of $A$, there exists $y\in mub(A)$ such that $y\le x$.
A poset $P$ is said to satisfy the property $m$, if for all finite sets $A\subseteq P$, $mub(A)$ is complete.
A poset $P$ is said to satisfy the property $M$, if $P$ satisfies the property $m$ and for all finite set $A\subseteq P$, $mub(A)$ is finite.

\begin{rmk}
Let $L$ be a complete lattice and  $a\in L$ be a C-compact element. If  $x, y\in L$ such that $a\leq x\vee y$,
then $a\leq \bigvee (\downarrow\!x\cup\!\downarrow\!y)$ and $\downarrow\!x\cup\!\downarrow\!y$ is Scott closed, so $a\in \downarrow\!x\cup\!\downarrow\!y$, implying $a\le x $ or $a\le y$. Thus $a$ is $\vee$-irreducible.
\end{rmk}

\begin{cor}
For any  dcpo $P$, $\kappa(C_{\sigma}(P))\subseteq Irr_{\sigma}(P)$. That is, all C-compact Scott closed sets are  irreducible.
\end{cor}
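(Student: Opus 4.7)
The plan is to derive the corollary as a direct application of the preceding remark to the complete lattice $L = C_{\sigma}(P)$. The essential point is that finite joins in $C_{\sigma}(P)$ coincide with set-theoretic unions, so that the hypothesis of the remark translates literally into the defining property of an irreducible closed set.

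First I would record the elementary fact that for any $F_1, F_2 \in C_{\sigma}(P)$ one has $F_1 \vee F_2 = F_1 \cup F_2$ inside $C_{\sigma}(P)$. This holds because the Scott-closed subsets of $P$ are precisely the closed sets of the Scott topology $\sigma(P)$, and finite unions of closed sets in any topological space are closed; so $F_1 \cup F_2$ is already Scott-closed and must therefore be the join.

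Next, let $A \in \kappa(C_{\sigma}(P))$ and suppose $A \subseteq F_1 \cup F_2$ with $F_1, F_2 \in C_{\sigma}(P)$. Rephrased, $A \le F_1 \vee F_2$ in the complete lattice $C_{\sigma}(P)$. Applying the preceding remark to $L = C_{\sigma}(P)$ with the C-compact element $a = A$ and $x = F_1$, $y = F_2$, one obtains $A \le F_1$ or $A \le F_2$, i.e., $A \subseteq F_1$ or $A \subseteq F_2$. Hence $A$ satisfies the defining property of an irreducible subset of $\Sigma P$, and so $A \in Irr_{\sigma}(P)$.

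There is no substantial obstacle beyond verifying that the hypothesis of the preceding remark is in force in the case $L = C_{\sigma}(P)$, which reduces to the topological fact that finite unions of closed sets are closed. The only minor care needed concerns the empty set (which is vacuously irreducible but excluded from $Irr_{\sigma}(P)$ by convention); this is handled tacitly by restricting attention to nonempty C-compact elements of $C_{\sigma}(P)$, as is standard in this setting.
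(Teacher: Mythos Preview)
Your proposal is correct and follows exactly the route the paper intends: the corollary carries no separate proof in the paper and is meant to be read as an immediate specialization of the preceding remark to the complete lattice $L=C_{\sigma}(P)$, using that finite joins in $C_{\sigma}(P)$ are set-theoretic unions. Your caveat about the empty set is apt; the paper simply passes over this boundary case.
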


\begin{lemC}[\cite{He-Xu}]\label{lm-M-c-compact-ideal} Let $P$ be a dcpo. Then
\begin{enumerate}[label=(\arabic*)]
\item For all $x\in P$, $\downarrow\!x\in \kappa(C_{\sigma}(P))$.

\item If $P$ satisfies the property $M$, then  $A\in\kappa(C_{\sigma}(P))$ iff $A= \downarrow\!x$
for some $x\in P$.
\end{enumerate}
\end{lemC}

\begin{thm}
If $P$ is a dcpo satisfying the property $M$ and  the condition (2) in Corollary \ref{tm-c1-c2-faithful}, then $P$ is $C_{\sigma}$-unique.
\end{thm}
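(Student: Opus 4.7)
The plan is to mimic the proof of Theorem \ref{quasicontinuous elements} (which specialises to Corollary \ref{tm-c1-c2-faithful} through down-linear elements), replacing its one use of bounded-soberness by property $M$ together with the cited Lemma \ref{lm-M-c-compact-ideal}. Let $F\colon C_{\sigma}(P)\lra C_{\sigma}(Q)$ be a lattice isomorphism; I build an order-isomorphism $f\colon P\to Q$ in three stages.

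First, fix a down-linear element $d\in P$. Then $\dr\!\!d$ is a chain, hence a continuous, in particular quasicontinuous, dcpo, and by Lemma \ref{quasicontinuous is scl-faithul} it is $C_{\sigma}$-unique. By Remark \ref{xu-lawson}, $F$ restricts to a lattice isomorphism $C_{\sigma}(\dr\!\!d)=\dr_{C_{\sigma}(P)}(\dr\!\!d)\cong \dr_{C_{\sigma}(Q)} F(\dr\!\!d)=C_{\sigma}(F(\dr\!\!d))$, forcing $\dr\!\!d\cong F(\dr\!\!d)$ as dcpos. Hence $F(\dr\!\!d)$ has a maximum $f(d)\in Q$, and being a Scott-closed lower set with top $f(d)$, it equals $\dr\!\!f(d)$.

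Next, extend $f$ to an arbitrary $x\in P$ using condition (2). Write $x=\sup_i d_i$ with $\{d_i\}$ a directed family of down-linear elements. A direct computation gives $\dr\!\!x=\bigvee_i\dr\!\!d_i$ in $C_{\sigma}(P)$, so $F(\dr\!\!x)=\bigvee_i\dr\!\!f(d_i)$ in $C_{\sigma}(Q)$. Since $\{f(d_i)\}$ inherits directedness from $\{d_i\}$, the join on the right equals $\dr\!\!y$ where $y=\sup_i f(d_i)$; setting $f(x):=y$ is unambiguous, as $y$ is uniquely determined by $F(\dr\!\!x)$. The chain of equivalences $x_1\le x_2\Leftrightarrow \dr\!\!x_1\subseteq\dr\!\!x_2\Leftrightarrow F(\dr\!\!x_1)\subseteq F(\dr\!\!x_2)\Leftrightarrow f(x_1)\le f(x_2)$ shows that $f$ is an order-embedding.

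Finally, for surjectivity---the step where Corollary \ref{tm-c1-c2-faithful} relies on bounded-soberness---invoke property $M$. Given $y\in Q$, Lemma \ref{lm-M-c-compact-ideal}(1) gives $\dr\!\!y\in\kappa(C_{\sigma}(Q))$, and since C-compactness is a lattice-theoretic property, $F^{-1}(\dr\!\!y)\in\kappa(C_{\sigma}(P))$. By property $M$ and Lemma \ref{lm-M-c-compact-ideal}(2), $F^{-1}(\dr\!\!y)=\dr\!\!x$ for some $x\in P$, and then $\dr\!\!f(x)=F(\dr\!\!x)=\dr\!\!y$, so $f(x)=y$. Hence $f$ is an order-isomorphism and $P\cong Q$. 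The main new ingredient is precisely this third step: under property $M$, Lemma \ref{lm-M-c-compact-ideal} identifies the C-compact Scott-closed sets of $P$ with the principal ideals, playing exactly the role that bounded-soberness played in Corollary \ref{tm-c1-c2-faithful}; the real technical content, already present in the corollary, sits in the first stage where Lemma \ref{quasicontinuous is scl-faithul} pins down $F(\dr\!\!d)$ as a principal ideal.
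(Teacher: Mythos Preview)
Your proof is correct and follows essentially the same strategy as the paper's: build $f\colon P\to Q$ on down-linear elements first, extend via directed suprema using condition (2), and invoke property $M$ together with Lemma \ref{lm-M-c-compact-ideal} for surjectivity. The only cosmetic difference is that the paper first constructs the inverse $h'\colon Q\to P$ from Lemma \ref{lm-M-c-compact-ideal} and then uses $h'$ to see directly that $H(\dr\!\!d)$ is a chain with a largest element, whereas you reach the same conclusion by applying Lemma \ref{quasicontinuous is scl-faithul} to the chain $\dr\!\!d$.
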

\begin{proof}
Let $P$ be a dcpo satisfying the condition (2) in Corollary \ref{tm-c1-c2-faithful} and the property $M$. Assume that $Q$ is a dcpo and there is an order isomorphism $H: C_{\sigma}(P)\rightarrow C_{\sigma}(Q)$.
 Then the restrictions $H: \kappa(C_{\sigma}(P))\rightarrow\kappa(C_{\sigma}(Q))$ and $H: Irr_{\sigma}(P)\rightarrow Irr_{\sigma}(Q)$ are all order isomorphisms.

For all $q\in Q$, by Lemma \ref{lm-M-c-compact-ideal}(1),  $\downarrow\!q\in \kappa(C_{\sigma}(Q))$,  then $H^{-1}(\downarrow\!q)={\downarrow\!x_q}$ for a unique $x\in P$ by Lemma \ref{lm-M-c-compact-ideal}(2).  Now  define a map $h': Q\to P$ such that $h'(q)=x_q$ iff $H^{-1}({\downarrow\!q})={\downarrow\!x_q}$. The mapping $h'$ is  monomorphic and order preserving since $H^{-1}$ is.  Note that $\kappa(C_{\sigma}(Q))\cong \kappa(C_{\sigma}(P))\cong P$ is a dcpo.

Now let $x$ be any element of $P$.
\begin{enumerate}[label=(\roman*)]
\item If $x$ is down-linear, then $H(\downarrow\!\!x)$ is a linear subset in $Q$ (if $y_1, y_2\in H(\downarrow\!\! x)$, then $h'(y_1), h'(y_2)\in\downarrow\!\!x$), and is Scott closed. The supremum  $\sup_Q H(\downarrow\!x)$ exists and is in $H(\downarrow\!x)$. Thus $H(\downarrow\!x)={\downarrow\!q_x}$ for some $q_x\in Q$.

\item If $x$ is not down-linear, then  $x$ is the supremum of a directed set $C$ of down-linear elements. Since  $H$ preserves sups in $\kappa(C_{\sigma}(P))$ and $\kappa(C_{\sigma}(Q))$, we have that
\[\begin{array}{lll}
 H(\downarrow\!x)&=&H(\downarrow\!\sup C)\\
                 &=&H(\sup_{\kappa(C_{\sigma}(P))} \{\downarrow\!c: c\in C\})\\
                 &=&\sup_{\kappa(C_{\sigma}(Q))} \{H(\downarrow\!c): c\in C\}\\
                 &=&\downarrow\!\sup_Q \{q_c:  \downarrow\!q_c=H(\downarrow\!c), c\in C\}\\
                 &=&\downarrow\!q_x,\end{array}\]
for some $q_x\in Q$.
\end{enumerate}
By these facts, we defined a mapping $h: P\to Q$ such that $h(x)=q_x$ iff $F(\downarrow\!x)=\downarrow\!q_x$. It is then easy to see that $h$ is  monomorphic and order preserving since $H$ is. In addition, it is easy to verify  that $h'$  is the inverse of $h$, hence $h$ is an order isomorphism between $P$ and $Q$, as desired.
\end{proof}
 \noindent Note that  Kou's and Johnstone's examples of non-sober dcpos do not have the property $M$.

\section{Remarks and some possible further work}
We close the paper with some additional remarks and problems for further exploration.
\begin{rmk}\leavevmode
\begin{enumerate}[label=(\arabic*)]
\item If $P$ is a $C_{\sigma}$-unique dcpo and $P^*$ is the dcpo obtained by adding a top element to $P$, then one can show that $P^*$ is also $C_{\sigma}$-unique.
Let $X$ be the dcpo of Johnstone. Then $X^*$ is $C_{\sigma}$-unique, but $X^*$ is not bounded sober ($X$ is an irreducible Scott closed set of $X^*$ which is not the closure of any point of $X^*$). Thus a $C_{\sigma}$-unique dcpo  need not be bounded sober. So, bounded sobriety is not a necessary condition for a dcpo to be  $C_{\sigma}$-unique.

\item Recently, Ho, Goubault-Larrecq, Jung and Xi \cite{HJX-16}  constructed a pair of non-isomorphic dcpos having isomorphic Scott topologies,  showing the existence of non-$C_{\sigma}$-unique dcpos. Their  counterexample also reveals that sobriety is not a sufficient condition for a dcpo to be $C_{\sigma}$-unique.
\end{enumerate}
In view of the above remarks, to identify larger classes of  $C_{\sigma}$-unique dcpos and  formulate a  full characterization of $C_{\sigma}$-unique dcpos will be our future work.
\end{rmk}
\noindent{\bf Acknowledgement}
\begin{enumerate}[label=(\arabic*)]
\item This work was supported by  NIE AcRF Project (RI 3/16 ZDS),  NSF of China (11671008) and University Science Research Project of Jiangsu Province (15KJD110006).

\item Two anonymous  referees have provided us with many helpful comments and constructive suggestions for improvement. We are very grateful to them.
\end{enumerate}

\appendix
\section{}

In this part, for reader's convenience we present some details of the proof of a result, essentially due to  Drake and Thron,  on spaces which are uniquely
determined (among all $T_0$ spaces) by means of their closed set lattices. (This part is requested by one of the referees).
In \cite{drake-thron-1965}, Drake and Thron proved the following result.

\begin{cor}\label{cor:app}
Every representation family of a $\mathcal{C}$-lattice $(\Gamma, \ge)$ has exactly one element iff every irreducible element of $\Gamma$ is strongly irreducible.
\end{cor}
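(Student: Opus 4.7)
The plan is to prove the equivalence by relating the lattice-theoretic conditions to the topological conditions on $T_0$ spaces that represent the $\mathcal{C}$-lattice $\Gamma$, specifically sobriety and the $T_D$ axiom. Recall that in the Drake--Thron setup, a $\mathcal{C}$-lattice $\Gamma$ is (up to isomorphism) the closed set lattice of some $T_0$ space, the representation family of $\Gamma$ is the collection of all $T_0$ spaces $X$ with $C(X)\cong\Gamma$ (taken up to homeomorphism), an irreducible element of $\Gamma$ corresponds to an irreducible closed set (hence to a point in the sober representation via $Irr(\cdot)$), and a strongly irreducible element is an irreducible element that additionally satisfies a lattice property equivalent to being the closure of a point whose derived set is closed (the $T_D$ condition).

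For the implication ($\Leftarrow$), assume every irreducible element of $\Gamma$ is strongly irreducible, and let $X,Y$ be two $T_0$ spaces in the representation family. By Remark~\ref{irreducible sets}(3), the isomorphism $C(X)\cong C(Y)\cong\Gamma$ induces a bijection between $Irr(X)$ and $Irr(Y)$. I would then show that for any $T_0$ space $Z$ with $C(Z)\cong\Gamma$, the map $z\mapsto cl(\{z\})$ is a bijection from $Z$ onto the set of strongly irreducible elements of $\Gamma$: surjectivity uses that every irreducible is strongly irreducible (so $Z$ is sober at each irreducible), and injectivity uses the $T_D$-flavored content built into strong irreducibility. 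Since this identification is carried out intrinsically inside $\Gamma$, it matches in $X$ and in $Y$, giving a bijection $X\to Y$ that sends a point $x$ to the unique $y$ with $cl_Y(\{y\})$ corresponding to $cl_X(\{x\})$; checking that closed sets correspond then gives a homeomorphism.

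For the implication ($\Rightarrow$), I would argue the contrapositive: if some irreducible element $a\in\Gamma$ fails to be strongly irreducible, then I produce two non-homeomorphic $T_0$ spaces realizing $\Gamma$. Two standard modifications of a given representation are available, depending on which half of the strong irreducibility condition fails. If $a$ is irreducible but not the closure of a point in one representation (sobriety fails at $a$), one can pass to the sobrification, which also represents $\Gamma$ (cf.~Remark~\ref{soberfication}(3)) but has strictly more points, hence is not homeomorphic to the original. If instead sobriety holds at $a$ but the $T_D$-condition fails (so there is a point $x$ with $cl(\{x\})\setminus\{x\}$ not closed), one can split or duplicate $x$ to obtain a different $T_0$ space with the same closed set lattice. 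In either case, uniqueness of representation fails.

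The main obstacle I expect is the careful translation of ``strongly irreducible'' into the $T_D$-axiom at the level of the closed set lattice, and verifying that the two modifications above genuinely produce spaces with closed set lattices still isomorphic to $\Gamma$ (this is the technical heart of Drake--Thron's original argument). Apart from this bookkeeping, once the correspondence ``irreducible $=$ closure of a point'' and ``strongly irreducible $=$ $T_D$-point'' is firmly established, the rest of the proof is a matter of routinely assembling bijections from the lattice data, together with Remark~\ref{soberfication}(3) and the standard fact that a sober $T_D$ space is reconstructible up to homeomorphism from its closed set lattice.
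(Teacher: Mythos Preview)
The paper does not prove this statement at all: Corollary~\ref{cor:app} is quoted verbatim as a result of Drake and Thron \cite{drake-thron-1965}, with no proof supplied. What the appendix actually proves is the adjacent Fact~\ref{fact:2} (a space is sober and $T_D$ iff every irreducible closed set is strongly irreducible); Fact~\ref{fact:1} is asserted to be a definitional unpacking of Corollary~\ref{cor:app}, and Fact~3 is obtained by combining the two. So there is no ``paper's own proof'' of Corollary~\ref{cor:app} to compare your proposal against.

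That said, your outline is a reasonable sketch of how one would reprove Drake--Thron's result, and it is broadly in the spirit of the appendix's treatment of Fact~\ref{fact:2}. Two remarks on the proposal itself. First, your case split in the $(\Rightarrow)$ direction (``sobriety fails at $a$'' versus ``$T_D$ fails at $a$'') is not a lattice-intrinsic dichotomy; it only makes sense after fixing a representation $X$ and invoking (the contrapositive of) Fact~\ref{fact:2} to conclude that $X$ is either non-sober or non-$T_D$. You should make that dependence explicit. Second, the phrase ``split or duplicate $x$'' is not right: duplicating a point destroys $T_0$. The standard construction when $X$ is sober but not $T_D$ is to \emph{remove} a point $x$ whose derived set is not closed; because $cl(cl(\{x\})\setminus\{x\})=cl(\{x\})$, one checks that $F\mapsto F\cap(X\setminus\{x\})$ is a lattice isomorphism $C(X)\to C(X\setminus\{x\})$, while $X\setminus\{x\}$ is no longer sober and hence not homeomorphic to $X$. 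You correctly flag this step as the technical heart, but as written your description of it is incorrect.
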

Here a lattice is called a $\mathcal{C}$-lattice if it is isomorphic to  the lattice $C(X)$ of all closed sets of a topological space $X$.
An element $a$ of a  lattice $L$ is called irreducible (strongly irreducible) iff $a$ can not be expressed as the supremum of a finite (arbitrary) number of elements of $L$, which are strictly less than $a$.

By the definition of representation families of $\mathcal{C}$-lattices (see page 58 of \cite{drake-thron-1965}) we deduce the following fact, which is equivalent to the above Corollary~\ref{cor:app}:

\begin{fact}\label{fact:1}
A $T_0$ topological space $X$ has the property that (for any $T_0$ space $Y$) $C(X)$ isomorphic to $C(Y)$ implies $X$ is homeomorphic to $Y$
if and only if every irreducible closed set in the space  is strongly irreducible.
\end{fact}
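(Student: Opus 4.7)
The strategy is to derive Fact~\ref{fact:1} as a direct translation of Drake and Thron's Corollary~\ref{cor:app} into topological language, so the ``proof'' is really the unfolding of two definitions. First I would unpack the definition of a \emph{representation family} of a $\mathcal{C}$-lattice $\Gamma$: following Drake and Thron's setup, a representation family of $\Gamma$ consists of the (homeomorphism classes of) $T_0$ spaces $Y$ whose closed set lattice $C(Y)$ is isomorphic to $\Gamma$. Thus ``every representation family of $\Gamma$ has exactly one element'' is, when we take $\Gamma = C(X)$, precisely the assertion that $C(X) \cong C(Y)$ forces $X$ and $Y$ to be homeomorphic (among $T_0$ spaces). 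This gives the left-hand side of the desired biconditional in purely lattice-theoretic terms.

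Next I would specialize the lattice-side condition of Corollary~\ref{cor:app} to $\Gamma = C(X)$. Since irreducibility and strong irreducibility are lattice-intrinsic properties of elements (as observed in Remark~\ref{irreducible sets}(3)), they transfer along any isomorphism of lattices. Interpreting these directly in $C(X)$, an irreducible element is a closed set $F$ that cannot be written as the union of finitely many strictly smaller closed sets (i.e.\ the usual topological notion), and a strongly irreducible element is one that cannot be written as the join, in the closed-set lattice, of any family of strictly smaller closed sets. Combining this translation with the unfolding of ``representation family'' above, Corollary~\ref{cor:app} becomes exactly the statement of Fact~\ref{fact:1}.

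The main obstacle, which I would spend the bulk of the argument on, is a careful reconciliation of order conventions. Drake and Thron phrase their result for an abstract lattice written $(\Gamma, \ge)$, whose ordering need not coincide a priori with inclusion in $C(X)$; a $\mathcal{C}$-lattice is defined up to isomorphism, so the direction of the order has to be pinned down before their ``supremum'' matches the join in the closed-set lattice $(C(X), \subseteq)$. I would therefore state explicitly the isomorphism $\Gamma \to C(X)$ being used and verify that, under it, suprema of strictly smaller elements in $\Gamma$ correspond to unions (respectively closures of unions) of proper closed subsets of $X$. Once this bookkeeping is done, both halves of the biconditional in Fact~\ref{fact:1} fall out of Corollary~\ref{cor:app} without any additional topological input.
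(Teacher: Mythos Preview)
Your proposal is correct and matches the paper's own treatment: the paper does not give a standalone proof of Fact~\ref{fact:1} but simply states that it is equivalent to Corollary~\ref{cor:app} once one unfolds Drake and Thron's definition of a representation family, which is exactly the translation you carry out. Your additional discussion of the order convention $(\Gamma,\ge)$ versus $(C(X),\subseteq)$ is more careful than the paper, which passes over this point silently.
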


A space $X$ is called a $T_D$ space iff for any $x\in X$, the derived set $d(\{x\})=cl(\{x\})-\{x\}$ is a closed set (see Definition 2.1 of \cite{thron-1962}). For example, every $T_1$ space is a $T_D$ space.

\begin{fact}\label{fact:2}
  A topological space is both sober and $T_D$ iff every irreducible closed set in the space  is strongly irreducible.
\end{fact}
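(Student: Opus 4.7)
The plan is to read off strong irreducibility in the concrete lattice $C(X)$: a closed set $A$ is strongly irreducible iff there is no family $\mathcal{F}$ of closed subsets of $A$, each strictly smaller than $A$, with $A = cl(\bigcup \mathcal{F})$. Throughout, the space is tacitly $T_0$, as sobriety is defined here only for $T_0$ spaces (and without $T_0$ an indiscrete two-point example already defeats the equivalence); this is the setting of the appendix. The two implications are then handled separately.

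For the forward direction, assume $X$ is sober and $T_D$, and let $A$ be a nonempty irreducible closed set (the empty case is vacuous). By sobriety $A = cl(\{x\})$ for some $x$. Suppose for contradiction $A = cl(\bigcup \mathcal{F})$ with each $F \in \mathcal{F}$ closed and $F \subsetneq A$. No such $F$ can contain $x$, otherwise $A = cl(\{x\}) \subseteq F \subsetneq A$. Hence $\bigcup \mathcal{F} \subseteq A \setminus \{x\}$, and the latter is closed by $T_D$, so $cl(\bigcup \mathcal{F}) \subseteq A \setminus \{x\} \subsetneq A$, a contradiction.

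For the converse direction, assume every irreducible closed set is strongly irreducible. Sobriety is immediate: a nonempty irreducible closed $A$ can be written as $A = cl\bigl(\bigcup_{y \in A} cl(\{y\})\bigr)$ with each $cl(\{y\}) \subseteq A$, so strong irreducibility forces $cl(\{y\}) = A$ for some $y$. For $T_D$, fix $x \in X$ and let $D = cl(\{x\}) \setminus \{x\}$. The set $cl(\{x\})$ is irreducible (as the closure of a point), hence strongly irreducible. From $D \subseteq cl(\{x\})$ we get $cl(D) \subseteq cl(\{x\}) = D \cup \{x\}$, so either $cl(D) = D$ (and $T_D$ holds at $x$) or $cl(D) = cl(\{x\})$. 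In the latter case $cl(\{x\}) = cl\bigl(\bigcup_{y \in D} cl(\{y\})\bigr)$, and by $T_0$ each $y \in D$ satisfies $cl(\{y\}) \subsetneq cl(\{x\})$, contradicting strong irreducibility.

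The main subtlety I anticipate is this $T_D$ portion of the converse: one must observe that $cl(D)$ can only differ from $D$ by the single point $x$, and then invoke $T_0$ to secure strict inequalities $cl(\{y\}) \subsetneq cl(\{x\})$ for $y \in D$. All other steps amount to routine manipulations with singleton closures and the concrete shape of suprema in $C(X)$.
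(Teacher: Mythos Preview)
Your proof is correct and follows essentially the same route as the paper's: both directions use that suprema in $C(X)$ are closures of unions, the forward direction uses sobriety to write $A=cl(\{x\})$ and then $T_D$ to trap any strictly smaller closed subset inside $A\setminus\{x\}$, and the converse recovers sobriety from the point-closure decomposition and $T_D$ by arguing that $cl(D)=cl(\{x\})$ would violate strong irreducibility via $T_0$. Your explicit remark that the ambient space must be $T_0$ is a welcome clarification that the paper leaves implicit.
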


\begin{proof} First note that for any $\{A_i: i\in I\}\subseteq C(X)$, the supremum $\bigvee_{C(X)}\{A_i: i\in I\}$ of $A_i's$ in the lattice $C(X)$ equals
$cl(\bigcup\{A_i: i\in I\})$.

Assume that the space $X$ is both sober and $T_D$. Let $F$ be an irreducible element of $C(X)$. Then $F=cl(\{x\})$ for some $x\in X$ because $X$ is sober.
Let $F=\bigvee_{C(X)}\{A_i: i\in I\}$ holds, where $A_i\in C(X) (i\in I)$. Then $\bigcup\{A_i: i\in I\}\subseteq F=cl(\{x\})$. Thus $A_i\subseteq cl(\{x\})$ for each $i$.
If $cl(\{x\})\not= A_i$ for every $i$, then $A_i\subseteq cl(\{x\})-\{x\}$, therefore $\bigcup\{A_i: i\in I\}\subseteq cl(\{x\})-\{x\}$. Since $cl(\{x\})-\{x\}$ is closed, we have
$F=cl(\bigcup\{A_i: i\in I\})\subseteq cl(\{x\})-\{x\}$, which contradicts $F=cl(\{x\})$. Hence $F=cl(\{x\})=A_i$ for $i$, showing that $F$ is strongly irreducible.

Now assume that every irreducible element of $C(X)$ is strongly irreducible.
Let $F$ be a non empty  irreducible member of $C(X)$. Then $F=\bigvee_{C(X)}\{cl(\{x\}): x\in F\}$, so $F=cl(\{x\})$ for some $x\in F$ because $F$ is strongly irreducible. It follows that $X$ is sober. Now let $x\in X$ be any element. Assume that $cl(\{x\})-\{x\}$ is not closed.
Then $cl(\{x\})-\{x\}$ is a proper subset of  $cl(cl(\{x\})-\{x\})$. But trivially $cl(cl(\{x\})-\{x\})\subseteq cl(\{x\})$, thus $cl(cl(\{x\})-\{x\})= cl(\{x\})$. Thus $cl(\{x\})=\bigvee_{C(X)}\{cl(\{y\}): y\in cl(\{x\})-\{x\}\}$. Since $cl(\{x\})$ is irreducible, it is strongly irreducible by the assumption, we have $cl(\{x\})=cl(\{y\})$ for some $y\in cl(\{x\})-\{x\}$, which is not possible because $X$ is $T_0$.
Therefore $cl(\{x\})-\{x\}$ must be closed. Hence $X$ is $T_D$.
\end{proof}

From Fact~\ref{fact:1} and Fact~\ref{fact:2} we derive the following result, first explicitly stated in \cite{thornton-1973} (page 504 line 11-13) with no proof (where sober spaces are called pc spaces).
\begin{fact}
A space $X$  has the property that (for any $T_0$ space $Y$) $C(X)$  isomorphic to
$C(Y)$ implies $X$ is homeomorphic to $Y$ iff $X$ is both sober and $T_D$.
\end{fact}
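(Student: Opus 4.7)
The plan is to derive the final Fact directly from Fact~\ref{fact:1} and Fact~\ref{fact:2} by transitivity of biconditionals. Both of those equivalences have already been established in the appendix, so essentially no additional topological argument is required; what remains is just to observe that they share a common middle condition.

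Concretely, Fact~\ref{fact:1} asserts
\[
\bigl[\,C(X)\cong C(Y)\Rightarrow X\cong Y\text{ for every }T_0\text{ space }Y\,\bigr]
\;\Longleftrightarrow\;
\bigl[\,\text{every irreducible element of }C(X)\text{ is strongly irreducible}\,\bigr],
\]
while Fact~\ref{fact:2} asserts
\[
\bigl[\,X\text{ is sober and }T_D\,\bigr]
\;\Longleftrightarrow\;
\bigl[\,\text{every irreducible element of }C(X)\text{ is strongly irreducible}\,\bigr].
\]
Chaining these two biconditionals through their common right-hand side yields the claimed equivalence in a single line. The proof I would write therefore consists simply of: ``By Fact~\ref{fact:1} and Fact~\ref{fact:2}, both conditions are equivalent to every irreducible closed subset of $X$ being strongly irreducible; hence they are equivalent to each other.''

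Since the result is an immediate logical consequence of the two preceding facts, there is no genuine obstacle here. All of the substantive work has already been done upstream: Fact~\ref{fact:1} translates Drake and Thron's Corollary from the language of representation families of $\mathcal{C}$-lattices into purely topological terms, and Fact~\ref{fact:2} is the direct topological characterization proved a few lines above, where soberness supplies representatives $cl(\{x\})$ for irreducible closed sets and the $T_D$ axiom rules out the nontrivial decomposition $cl(\{x\}) = cl(cl(\{x\})-\{x\})$ that would otherwise break strong irreducibility. Thus the only task in the proof proper is to cite these two facts and invoke transitivity.
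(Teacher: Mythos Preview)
Your proposal is correct and matches the paper's approach exactly: the paper simply states that the result follows from Fact~\ref{fact:1} and Fact~\ref{fact:2} without writing out any further argument. Your one-line transitivity observation is precisely what is intended.
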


\end{document}